\newtheorem{thm}{Theorem}
\newtheorem{result}{Result}
\newtheorem{pro}{Proposition}
\newtheorem{cl}{Claim}
\newcommand{\blind}{0}
\def\baselinestretch{1.19}
\begin{document}

\def\spacingset#1{\renewcommand{\baselinestretch}%
{#1}\small\normalsize} \spacingset{1}



\if0\blind
{\title{\bf Uniqueness of Maximum Scores in Countable-Outcome Round-Robin Tournaments}


\author
{
Gideon Amir
\thanks{Department of Mathematics, Bar-Ilan University, Ramat Gan 5290002, Israel.
Email: {\tt gideon.amir@biu.ac.il}.}
\and
Yaakov Malinovsky
\thanks{Department of Mathematics and Statistics, University of Maryland,
Baltimore County, Baltimore, MD 21250, USA.
Email: {\tt yaakovm@umbc.edu}.}
}
 \maketitle
} \fi
\if1\blind
{
  \bigskip
  \bigskip
  \bigskip
  \begin{center}
    {\LARGE\bf }
\end{center}
  \medskip
} \fi

\bigskip
\begin{abstract}
In this note, we extend a recent result on the uniqueness of the maximum score in a classical round-robin tournament to general round-robin tournament models with equally strong players, where the scores take values in $[0,\,1]$.
\end{abstract}
\noindent%
{\it Keywords: Complete graph, concentration function, large deviations, maximal score, negative dependence}

\noindent%
{\it MSC2020: 05C20, 05C07, 60F10, 60F15}

\spacingset{1.45} 

\section{Introduction and Problem Statement}
In a round-robin tournament, each of $n$ players competes against each of the other $n-1$ players. When player
$i$ plays against player $j$, player $i$'s reward is a random variable $X_{ij}$. Let
\begin{equation}
\label{eq:not}
s_i(n)=\sum_{j=1, j\neq i}^{n}{X_{ij}}
\end{equation}
denote the score of player $i, 1\leq i \leq n$, after playing against all the other $n-1$ players.
We assume that all ${n \choose 2}$ pairs $\left(X_{ij}, X_{ji}\right), 1\leq i< j\leq n$ are independent.
We refer to $\left(s_1(n), s_2(n),\ldots, s_n(n)\right)$ as the score sequence of the tournament.

Tournaments provide a model for a statistical inference called paired comparisons.
Measuring players' strength in chess tournaments by modeling paired comparisons of strength has a long history,
first appearing in \cite{Z1929}.
An excellent introduction to the combinatorial and some probabilistic aspects (with citations to earlier works) is given in \cite{Moon1968}. The statistical aspects are presented in \cite{D1988} (also with citations to earlier works). Additional material on round-robin tournaments and related issues can be found in recent papers and the references therein \citep{R2021, MM2021}.

In a classic round-robin tournament (Model $M_1$) for $i\neq j$, $X_{ij}+X_{ji}=1$, $X_{ij}\in \left\{0, 1\right\}$, and $\mathbb{P}(X_{ij}=1)=\mathbb{P}(X_{ij}=0)=1/2$, i.e., each of $n$ players wins or loses a game against each of the
other $n-1$ with equal probabilities. 
The tournament can be represented by complete oriented graphs in which the vertices represent the players, and each pair of distinct vertices $i$ and $j$ is joined by an edge oriented from $i$ to $j$ or from $j$ to $i$ according to whether, $X_{ij}=1$ or $X_{ji}=1$, respectively.
Let $r_n(M_1)$ denote the probability that the tournament with $n$ labeled vertices has a unique
vertex with maximum score under model $M_1$.
\cite{E1967} stated, without proof, that in a classical round-robin tournament (Model $M_1$ ), the probability that there is a unique
vertex with the maximum score tends to $1$ as $n$ tends to infinity, i.e.,
\begin{equation}
\label{eq:C1}
{\displaystyle \lim_{n\rightarrow \infty}r_n(M_1)=1.}
\end{equation}
For values of $n$ that are very small, it is possible to compute $r_n(M_1)$ from the distribution of the nondecreasing score sequences. However, even for small $n$, this becomes computationally impossible due to the fact that the size of the sample space is $2^{n\choose 2}$. Indeed, \cite{E1967} employed the distribution of the nondecreasing score vector up to $n=8$, as provided by \cite{D1959}, to calculate the values of $r_1(M_1),\ldots, r_8(M_1)$. It is interesting to note that \cite{M1923} generated the score sequences and their frequencies up to $n=9$.
\cite{Z2016} extended MacMahon's work and generated the nondecreasing score sequences and their frequencies for $n\leq 15$ using the Maple program.
In a survey paper, \cite{G1984} notes that Epstein's problem on $r_n(M_1)$ remains unsolved.
In a recent publication, \cite{MM2024} proved \eqref{eq:C1}. They used a method which was introduced by \cite{EW1977} that considered the analogous problem for the vertices of maximum degree in a random labelled graph in which pairs of distinct vertices are joined by an edge with probability $1/2$. In the next paragraph, we outline this method, which was applied to model $M_1.$

Set $$s^{\star}_n=\max\left\{s_1(n),\ldots, s_n(n)\right\},$$ and
\begin{align}
\label{eq:W}
W_n(t)=\sum_{1\leq v<u \leq n }I(t<s_{u}(n)=s_{v}(n)).
\end{align}

Under model $M$, we need to identify the value $t_n$, if such a value exists, so that
\begin{align}
\label{eq:Step1}
\lim_{n\rightarrow \infty}\mathbb{P}\left(s^{\star}_n>t_n \right)=1
\end{align}
and
\begin{equation}
\label{eq:Step2}
{\displaystyle \lim_{n\rightarrow \infty}\mathbb{P}\left(W_n(t_n)=0\right)=1.}
\end{equation}

Combining, \eqref{eq:Step1} and \eqref{eq:Step2}, we obtain
\begin{equation*}
\label{eq:ModelM}
{\displaystyle \lim_{n\rightarrow \infty}r_n(M)=1.}
\end{equation*}

The first question that could be asked is whether a similar result holds for a more general model, e.g. a chess tournament model with equally strong players,
where a player scores one point for a game won and half a point for a game drawn (hereafter model $M_2$.)
\cite{M1923b} considered such a model motivated by the Master's Chess Tournament which was held in London in 1922.
In that work, MacMahon generated the nondecreasing score sequences and their frequencies for $n\leq 6$.
It is interesting to note that \cite{Z1929} proposed a maximum likelihood estimation of a player's strength in chess tournaments (see also \cite{DE2001}, pages 161-186.)
Zermelo applied his estimation technique in analyzing the 1924 New York Chess Tournament, a famous tournament featuring prominent players like José Raúl Capablanca, Emanuel Lasker, and Alexander Alekhine.

In fact, we can consider an even more general model defined below.

{\bf Model $M_k$}: For integer $k\geq 1$, let $D_k=\left\{0, \frac{1}{k}, \frac{2}{k},\ldots,\frac{k-2}{k}, \frac{k-1}{k}, 1\right\}$ be the set of all possible values of $X_{ij}$. Each of these values can be obtained with positive probability.
For $i\neq j$, $X_{ij}+X_{ji}=1$, $X_{ij}\in D_k$. This together with the assumption that all players are
equally strong (i.e., $X_{ij}$ are equally distributed), implies
\begin{equation*}
\label{eq:sym}
0<\mathbb{P}\left(X_{ij}=a\right)=\mathbb{P}\left(X_{ij}=1-a\right)\,\,\,\, \text{for any}\,\,\,\, a\in D_k.
\end{equation*}

A possible extension of the model $M_k$, which has a finite number of payoffs, is to allow a countable number of payoffs taking values in the interval $[0,1]$. Such a model is formally described as follows.

{\bf Model $M_{[0,\,1]}$}: Let $D$ in $[0, 1]$ be the discrete set of all possible values of $X_{ij}$.
Each of these values can be obtained with positive probability.
For $i\neq j$, $X_{ij}+X_{ji}=1$, $X_{ij}\in D$. This together with the assumption that all players are
equally strong (i.e., $X_{ij}$ are equally distributed), implies 
\begin{equation}
\label{eq:symD}
0<\mathbb{P}\left(X_{ij}=a\right)=\mathbb{P}\left(X_{ij}=1-a\right)\,\,\,\, \text{for any}\,\,\,\, a\in D.
\end{equation}
We also assume that $|D|>1$.

In this paper, we show that in the round-robin tournament model with equally strong players, where scores take values on a countable set $D \subset [0, 1]$ (model $M_{[0,\,1]}$), the probability that there is a unique vertex with the maximum score tends to $1$ as $n \to \infty$. This result is more general than the corresponding result for the model $M_k$, $k \ge 1$, and includes it as a special case. In the following section, we present some preliminary results. Afterwards, we present a proof of the main result. Finally, the discussion section presents some related issues.

\section{Preliminary Results }

Let
\begin{equation*}
\label{eq:mom}
\mu={\mathbb E}(X_{ij}),\,\,\, \sigma=(Var(X_{ij}))^{1/2}.
\end{equation*}

Next, let
\begin{align}
\label{eq:t}
&
t_n=(n-1)\mu+x_n(n-1)^{1/2}\sigma,
\end{align}
where
\begin{align*}
&
x_{n}=[2\log(n-1)-(1 + \epsilon)\log(\log(n-1))]^{1/2},\,\,\,\epsilon>0.
\end{align*}

Assuming \eqref{eq:symD}, we obtain $\mu = \tfrac{1}{2}$ and $\sigma \le \tfrac{1}{2}$.

\begin{pro}
\label{eq:Pro1}
Assuming Model $M_{[0,\,1]}$, we have
\begin{align*}
\lim_{n\rightarrow \infty}\mathbb{P}\left(s^{\star}_n>t_n \right)=1.
\end{align*}
\end{pro}
\begin{proof}
Since ${\displaystyle x_{n}=o\left((n-1)^{1/6}\right)}$,
${\displaystyle \frac{x_{n}^{3}}{(n-1)^{1/2}}\rightarrow 0}$ and $\sigma(n-1)^{1/2}\rightarrow \infty$, as $n\rightarrow \infty$, it follows
from \cite[p. 553, Theorem 3]{F1971} that

\begin{align}
\label{Fe}
&
\mathbb{P}(s_1(n)>t_n)\sim 1-\Phi(x_n),
\end{align}
where $\Phi()$ is the standard normal distribution function. 
Also, since $x_n\rightarrow \infty$ as $n\rightarrow \infty$ (see for example, \cite[p. 175, Lemma 2]{F1968}),
\begin{equation}
\label{eq:MR}
1-\Phi(x_n)\sim \frac{1}{x_n}\varphi(x_n),
\end{equation}
where $\varphi()$ is the PDF of a standard normal random variable.
Combining \eqref{Fe} and \eqref{eq:MR}, we obtain
\begin{align}
\label{eq:tail}
\mathbb{P}(s_1(n)>t_n)\sim \frac{(\log(n-1))^{\epsilon/2}}{\sqrt{4\pi}(n-1)}.
\end{align}
It was shown in \cite{MR2022} that the scores 
$s_1(n), s_2(n), \ldots, s_n(n)$ 
are negatively associated (see \cite{JP1983} for the definition). 
This implies that 
\begin{equation}
\label{eq:MMR}
\mathbb{P}\left(s_1(n)\leq x, s_2(n)\leq x, \ldots,s_n(n)\leq x\right)\leq
\mathbb{P}\left(s_1(n)\leq x\right)\mathbb{P}\left(s_2(n)\leq x\right)\cdots \mathbb{P}\left(s_n(n)\leq x\right).
\end{equation}
Taking into account \eqref{eq:MMR}, the inequality $1 - c \le e^{-c}$, and the fact that the scores are equally distributed, we obtain
\begin{align}
\label{eq:B}
&
\mathbb{P}\left(s^{\star}_n>t_n \right)=1-\mathbb{P}\left(s^{\star}_n\leq t_n \right)\geq 1-\left(1-\mathbb{P}\left(s_1(n)>t_n \right)\right)^n \geq 1-e^{-n\mathbb{P}\left(s_1(n)>t_n \right)}.
\end{align}

Finally, combining \eqref{eq:tail} and \eqref{eq:B}, we obtain
\begin{align*}
\lim_{n\rightarrow \infty}\mathbb{P}\left(s^{\star}_n>t_n \right)=1.
\end{align*}
\end{proof}

Recall the definition of $t_n$ in \eqref{eq:t} and define
\begin{equation}
\label{eq:set}
A_n = \left\{ x:\,\,t_n-1<x\leq n-2\,\,\,\text{and}\,\, x\,\, \text{ is an attainable value of } s_u(n-1) \right\}.
\end{equation}

Recalling the definition of $W_n(t)$ in \eqref{eq:W}, we have the following proposition.

\begin{pro}
\label{eq:A}
Assuming Model $M_{[0,\,1]}$ and $n\geq 3$, we have
\begin{align*}
&
{\mathbb E}\left(W_n(t_n)\right)
\leq
\frac{n(n-1)}{2}\sum_{h\in A_n} \mathbb{P}\left(s_{u}(n-1)=h\right)\mathbb{P}\left(s_{v}(n-1)=h\right),
\end{align*}
where the set $A_n$ is defined in~\eqref{eq:set}.
\end{pro}

\begin{proof}
By the notation $\{x :\,\,\,\, \}$ we mean the attainable values of the corresponding scores only.
From the fact that the scores are equally distributed and by the law of total probability, we obtain, recalling \eqref{eq:W},

\begin{align}
\label{eq:RHS}
&
{\mathbb E}\left(W_n(t_n)\right)=\frac{n(n-1)}{2}\mathbb{P}\left(t_n< s_{u}(n)=s_{v}(n)\right)\nonumber\\
&
=\frac{n(n-1)}{2}
\sum_{{{h\in \left\{x:\,\, t_{n}<x\leq n-1\right\}} } }
\mathbb{P}\left(s_{u}(n)=h, s_{v}(n)=h\right)\nonumber\\
&
=\frac{n(n-1)}{2}\sum_{a\in D}\sum_{{{h\in \left\{x:\,\, t_{n}<a+x\leq a+n-2\right\}} }}\mathbb{P}\left(s_{u}(n)=h, s_{v}(n)=h\,|\, X_{uv}=a\right) \mathbb{P}\left(X_{uv}=a\right)\nonumber\\
&
=\frac{n(n-1)}{2}\sum_{a\in D}\sum_{{{h\in \left\{x:\,\, t_{n}<a+x\leq a+n-2\right\}} }} \mathbb{P}\left(s_{u}(n-1)=h-a\right)\mathbb{P}\left(s_{v}(n-1)=h-(1-a)\right)\mathbb{P}\left(X_{uv}=a\right)\nonumber\\
&
=\frac{n(n-1)}{2}\sum_{a\in D}\mathbb{P}\left(X_{uv}=a\right)\sum_{{{h\in \left\{x:\,\, t_{n}<a+x\leq a+n-2\right\}} }}\mathbb{P}\left(s_{u}(n-1)=h-a\right)\mathbb{P}\left(s_{v}(n-1)=h-(1-a)\right)\nonumber\\
&
\leq \frac{n(n-1)}{2}\sum_{a\in D }\mathbb{P}\left(X_{uv}=a\right)\sum_{{{h\in \left\{x:\,\, t_{n}<a+x\leq a+n-2\right\}} }} \mathbb{P}\left(s_{u}(n-1)=h-a\right)\mathbb{P}\left(s_{v}(n-1)=h-a\right)\nonumber\\
&
\leq
\frac{n(n-1)}{2}\sum_{a\in D }\mathbb{P}\left(X_{uv}=a\right)\sum_{{{h\in \left\{x:\,\, t_{n}-1<x\leq n-2\right\}} }} \mathbb{P}\left(s_{u}(n-1)=h\right)\mathbb{P}\left(s_{v}(n-1)=h\right)\nonumber\\
&
=
\frac{n(n-1)}{2}\sum_{h\in A_n} \mathbb{P}\left(s_{u}(n-1)=h\right)\mathbb{P}\left(s_{v}(n-1)=h\right),
\end{align}
where the first inequality follows from combining assumption \eqref{eq:symD} with the rearrangement inequality, and the second inequality follows from adding nonnegative summands.
\end{proof}

\begin{pro}
\label{eq:Pro3}
Assuming Model $M_{[0,\,1]}$, we have
\begin{equation}
{\displaystyle \lim_{n\rightarrow \infty}\mathbb{P}\left(W_n(t_n)=0\right)=1.}
\end{equation}
\end{pro}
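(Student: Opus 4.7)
The plan is to reduce the proposition to a first-moment bound via Markov's inequality and then control $RHS_n$ from Assertion 1 by means of a local central limit theorem. Since $W_n(t_n)$ is a non-negative integer-valued random variable, Markov's inequality gives $P(W_n(t_n) \geq 1) \leq E(W_n(t_n)) \leq RHS_n$, so it suffices to show $RHS_n \to 0$ as $n \to \infty$.

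The main input is a local limit theorem for the lattice sum $s_u(n-1)$. Since every element of $D_k$ has positive probability under Model $M_k$, the maximal arithmetic span of $X_{ij}$ is exactly $1/k$, and a classical local CLT (Gnedenko's theorem) gives, uniformly in admissible lattice points $h$,
$$P(s_u(n-1) = h) = \frac{1/k}{\sigma\sqrt{n-2}}\,\varphi\!\left(\frac{h - (n-2)\mu}{\sigma\sqrt{n-2}}\right) + o\!\left(\frac{1}{\sqrt{n}}\right).$$
Two consequences will be needed. First, because $\varphi$ is unimodal and $t_{n-1}-1 > (n-2)\mu$ for large $n$, the maximum of $P(s_u(n-1)=h)$ over $h > t_{n-1}-1$ is attained at the smallest such lattice point (near $h=t_{n-1}$), so
$$\max_{h > t_{n-1}-1} P(s_u(n-1) = h) = O\!\left(\frac{\varphi(x_{n-1})}{\sqrt{n}}\right) = O\!\left(\frac{(\log n)^{(1+\epsilon)/2}}{n^{3/2}}\right),$$
where the last step uses $\varphi(x_{n-1}) = (2\pi)^{-1/2}\exp(-x_{n-1}^2/2) \sim (\log(n-2))^{(1+\epsilon)/2}/(\sqrt{2\pi}\,(n-2))$ from the definition of $x_n$. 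Second, the same Mills-ratio computation as in the proof of Proposition \ref{eq:Pro1}, applied to $s_u(n-1)$ and to the shifted threshold $t_{n-1}-1$ (which standardizes to $x_{n-1} - o(1)$), yields
$$\sum_{h > t_{n-1}-1} P(s_u(n-1) = h) = P(s_u(n-1) > t_{n-1} - 1) = O\!\left(\frac{(\log n)^{\epsilon/2}}{n}\right).$$

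Combining these through the elementary bound $\sum p_h^2 \leq (\max p_h)\cdot (\sum p_h)$ produces
$$\sum_{h > t_{n-1}-1} P(s_u(n-1) = h)^2 = O\!\left(\frac{(\log n)^{(1+2\epsilon)/2}}{n^{5/2}}\right),$$
and multiplying by $n(n-1)/2$ gives $RHS_n = O\!\left((\log n)^{(1+2\epsilon)/2}/\sqrt{n}\right) \to 0$, which completes the argument. The main obstacle I anticipate is the honest invocation of the local CLT for the lattice variables: in model $M_1$ the integer-valued scores allow a direct combinatorial treatment, whereas for general $M_k$ one must verify that the span of the law of $X_{ij}$ is exactly $1/k$ (not a proper divisor), and this is precisely what the assumption that every element of $D_k$ occurs with positive probability guarantees, ruling out any hidden sub-lattice degeneracy that would distort the Gaussian approximation.
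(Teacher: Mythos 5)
Your overall strategy is the same as the paper's: bound $P(W_n(t_n)>0)$ by $E(W_n(t_n))\le RHS_n$, then control $RHS_n$ via $\sum_h p_h^2\le(\max_h p_h)(\sum_h p_h)$, estimating the tail sum by the Feller/Mills-ratio computation (this part is fine and matches \eqref{eq:tail}). The genuine gap is in the other factor. You invoke a classical local CLT with a \emph{uniform additive} error, $P(s_u(n-1)=h)=\frac{1/k}{\sigma\sqrt{n-2}}\varphi\bigl(\frac{h-(n-2)\mu}{\sigma\sqrt{n-2}}\bigr)+o\bigl(n^{-1/2}\bigr)$, and from it conclude $\max_{h>t_{n-1}-1}P(s_u(n-1)=h)=O\bigl((\log n)^{(1+\epsilon)/2}n^{-3/2}\bigr)$. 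This does not follow: at the threshold $t_{n-1}$ the standardized argument is $x_{n-1}\sim\sqrt{2\log n}$, so the Gaussian main term is of order $(\log n)^{(1+\epsilon)/2}n^{-3/2}=o(1/n)$, which is \emph{smaller} than the uniform error term of the local CLT (whether stated as $o(n^{-1/2})$ or, with an Esseen-type rate, $O(1/n)$ on the probability scale). In the moderate-deviation zone the additive error swamps the main term, so the LLT only yields $\max_h P(s_u(n-1)=h)=o(n^{-1/2})$ (or $O(1/n)$), and then $RHS_n\le\frac{n(n-1)}{2}\cdot O(1/n)\cdot O\bigl((\log n)^{\epsilon/2}/n\bigr)=O\bigl((\log n)^{\epsilon/2}\bigr)$, which does not tend to $0$. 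The same deficiency undermines your unimodality step: monotonicity of $\varphi$ does not transfer to the true point probabilities through an additive $o(n^{-1/2})$ error in that range.

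What is needed, and what the paper uses, is a local limit theorem for large deviations with \emph{relative} (multiplicative) error valid for $x=o(\sqrt n)$ --- Richter's theorem \citep{R1957}, applied after passing to the integer-valued variables $Y_{ij}=kX_{ij}$ (your span-$1/k$ observation is the right hypothesis check, but for Richter's theorem rather than Gnedenko's). That gives $P\bigl(s_u(n-1)=\lceil t_{n-1}-1\rceil\bigr)\sim\varphi(x_{n-1})/(\sigma\sqrt{n-2})$, i.e.\ exactly the $O\bigl((\log n)^{(1+\epsilon)/2}n^{-3/2}\bigr)$ bound you wanted, and together with the Esseen-based comparison of neighboring point probabilities (the paper's Claims \ref{eq:Claim2} and \ref{eq:Claim3}) the rest of your computation, $RHS_n=O\bigl((\log n)^{(1+2\epsilon)/2}/\sqrt n\bigr)\to 0$, goes through. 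So the architecture of your proof is sound, but the specific local-limit input you chose cannot certify probabilities that are $o(1/n)$, and replacing it by a large-deviation local limit theorem is not a cosmetic fix but the essential missing ingredient.
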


\begin{proof} 

First we need the following Claim.
\begin{cl}
\label{eq:cl1}
For any $\varepsilon > 0$ and sufficiently large $n$,
\begin{align*}
\sup_{x \in A_n} \mathbb{P}(s_1(n-1) = x) 
\le O\Bigg(\frac{(\log n)^{(1+\varepsilon)/2}}{n^{3/2}}\Bigg).
\end{align*}
\end{cl}

\begin{proof}{[Claim \ref{eq:cl1}]}
For any $\varepsilon > 0$, it follows from \eqref{eq:t} that  
\begin{equation}
\label{eq:L}
t_n - 1 \ge (n-2)\mu + x_{n} (n-2)^{1/2}:=l_n.
\end{equation}

For simplicity, let us denote $m = n - 2$ and $S_m = s_1(n-1)$.

Define, for $t\in\mathbb{R}$,
$$
\Pr_{t}(S_m=x)
:=\frac{\mathbb{E}\!\left(e^{tS_m}\mathbf{1}_{\{S_m=x\}}\right)}
{\mathbb{E}\!\left(e^{tS_m}\right)}.
$$

Then, for any $t\in\mathbb{R}$,

\[
\begin{aligned}
\mathbb{P}(S_m=x)
&=e^{-tx}\,\mathbb{E}\!\left(e^{tS_m}\mathbf{1}_{\{S_m=x\}}\right)=
e^{-tx}\,\mathbb{E}\!\left(e^{tS_m}\right)\,
   \frac{\mathbb{E}\!\left(e^{tS_m}\mathbf{1}_{\{S_m=x\}}\right)}
        {\mathbb{E}\!\left(e^{tS_m}\right)}\\[4pt]
&=e^{-tx}\,\mathbb{E}\!\left(e^{tS_m}\right)\,
   \Pr_t(S_m=x).
\end{aligned}
\]

For $t > 0$ and $x > l_n$, we have

\[
\mathbb{P}(S_m=x)
= e^{-tx}\,\mathbb{E}\!\left(e^{tS_m}\mathbf{1}_{\{S_m=x\}}\right)
\le e^{-t l_n}\,\mathbb{E}\!\left(e^{tS_m}\right)\Pr_t(S_m=x),
\]
where the inequality follows from the fact that $e^{-tx}$ is non-increasing in $x$ for $t>0$.

Therefore, for $t>0$,
\begin{equation}
\label{eq:sup}
\sup_{x>t_n-1}\mathbb{P}(S_m=x)
\le
\sup_{x>l_n}\mathbb{P}(S_m=x)
\le
e^{-t l_n}\,\mathbb{E}\!\left(e^{tS_m}\right)
\sup_{x>l_n}\Pr_t(S_m=x).
\end{equation}

\item
We have
$
\mathbb{E}\!\left(e^{tS_m}\right) = \big(\mathbb{E}\!\left(e^{tX_1}\right)\big)^m = \big(M_{X_1}(t)\big)^m.
$
Denote
\begin{equation*}
K(t) := \log M_{X_1}(t),
\end{equation*}
so that
\begin{equation*}
\log \mathbb{E}\!\left(e^{t S_m}\right) = m K(t),
\end{equation*}
and $K^{(j)}(t)$ denotes the $j$-th derivative of $K(t)$ with respect to $t$.

Using the Taylor expansion about $t=0$ and noting that for the symmetric model \eqref{eq:symD} with support on $[0,1]$, $K^{(3)}(0)=0$, we have
\begin{equation*}
K(t) = \mu t + \frac{\sigma^2}{2} t^2 + R(t),
\end{equation*}
where $\mu = K^{(1)}(0) = \mathbb{E}\left(X_1\right)$, $\sigma^2 = K^{(2)}(0) = \mathrm{Var}(X_1)$, and the remainder term is
\begin{equation*}
R(t) = \frac{K^{(4)}(\eta)}{24} t^4 \le c\, t^4, 
\end{equation*}
for some $0 < \eta < t$ and some positive constant $c$.

Therefore, we have
\begin{align*}
\log \mathbb{E}(e^{tS_m}) &= m K(t) = m \mu t + m \frac{\sigma^2}{2} t^2 + m R(t),
\end{align*}

and
\begin{equation*}
e^{-t l_n} \mathbb{E}\!\left(e^{t S_m}\right) 
= e^{\frac{m \sigma^2 t^2}{2} - t \sigma \sqrt{m} x_{n} + m R(t)}
= e^{\frac{m \sigma^2 t^2}{2} - t \sigma \sqrt{m} x_{n} + O(m t^4)},
\end{equation*}
where the last equality uses $R(t) = O(t^4)$.

Choose 
\begin{equation*}
t^{\star} = \frac{x_{n}}{\sigma \sqrt{m}}.
\end{equation*}
Then 
\begin{equation*}
O(m \left(t^{\star}\right)^4) = O\Big(\frac{(\log m)^2}{m}\Big),
\end{equation*}
and we obtain
\begin{align*}
\frac{m \sigma^2 (t^\star)^2}{2} - t^\star \sigma \sqrt{m} x_n + O(m t^4)
&= \frac{m \sigma^2 \frac{x_n^2}{\sigma^2 m}}{2} - \frac{x_n}{\sigma \sqrt{m}} \sigma \sqrt{m} x_n + O(m \left(t^{\star}\right)^4) \nonumber\\
&= -\frac{x_n^2}{2} + o(1).
\end{align*}

Therefore, for large $n$ and $t>0$, 
\begin{align}
\label{eq:first}
e^{-t l_n} \mathbb{E}\!\left(e^{t S_m}\right) 
\le e^{\Big(-\frac{x_n^2}{2} + o(1)\Big)}
\le \frac{(\log n)^{(1+\varepsilon)/2}}{n} \,(1 + o(1)).
\end{align}

For the evaluation of the last term $\sup_{x>l_n}\Pr_t(S_m=x)$ in \eqref{eq:sup}, we will need the following theorem due to \cite{R1961}, who proved Kolmogorov's inequality on the rate of decrease of Lévy's concentration function for the sum of independent discrete random variables (see also \cite{P1975}, pages 56 and 319):

\begin{thm}[\cite{R1961}]
Let $X_1,\ldots,X_n$ be independent discrete random variables, and let $p_j = \sup_{x} \mathbb{P}\left(X_j = x\right).$
Then
$
\sup_{x} \mathbb{P}(\sum_{j=1}^{n}X_j = x) \le \frac{A}{\sqrt{\sum_{j=1}^{n} (1 - p_j)}},
$
for some constant $A$.
\end{thm}

Denote
\begin{equation*}
p_{\max} = \sup_{a} \Pr_t(X_1 = a)<1.
\end{equation*}

From Rogozin's Theorem, it immediately follows that
\begin{align}
\label{eq:second}
\sup_{x > l_n} \Pr_t(S_n = x) 
&\le \sup_{x} \Pr_t(S_n = x) 
\le \frac{A}{\sqrt{1-p_{\max}}\sqrt{m}}.
\end{align}

Combining \eqref{eq:first} and \eqref{eq:second}, we obtain that for any $\varepsilon > 0$ and sufficiently large $n$,
\begin{align*}
\sup_{x \in A_n} \mathbb{P}(s_1(n-1) = x) 
\le O\Bigg(\frac{(\log n)^{(1+\varepsilon)/2}}{n^{3/2}}\Bigg).
\end{align*}

\end{proof}

Now, by Proposition~\ref{eq:A}, and Claim \ref{eq:cl1} , as $n \to \infty,$
\begin{align}
\label{eq:last}
&
{\mathbb E}\left(W_n(t_n)\right)
\leq
\frac{n(n-1)}{2}\sum_{h\in A_n} \mathbb{P}\left(s_{u}(n-1)=h\right)\mathbb{P}\left(s_{v}(n-1)=h\right)\nonumber\\
&
\leq\frac{n(n-1)}{2}\sup_{x \in A_n} \mathbb{P}(s_1(n-1) = x) \sum_{h\in A_n} \mathbb{P}\left(s_{v}(n-1)=h\right)\nonumber\\
&
=
\frac{n(n-1)}{2}\sup_{x \in A_n} \mathbb{P}(s_1(n-1) = x) \mathbb{P}\left(s_{v}(n-1)> t_{n}-1\right)\nonumber\\
&
\leq
\frac{n(n-1)}{2}
\frac{1}{\sigma\sqrt{n-2}}O\Bigg(\frac{(\log n)^{(1+\varepsilon)/2}}{n^{3/2}}\Bigg)
\mathbb{P}\left(s_{v}(n-1)> t_{n}-1\right)\nonumber\\
&\sim 
\frac{n(n-1)}{2}
O\Bigg(\frac{(\log n)^{(1+\varepsilon)/2}}{n^{3/2}}\Bigg)
\frac{(\log(n-1))^{\epsilon/2}}{\sqrt{4\pi}(n-1)},
\end{align}
where the last line is obtained in a manner similar to~\eqref{eq:tail}.

Since $W_n(t_n) \ge 0$, and by \eqref{eq:last}, it follows that
\begin{equation*}
\label{eq:EW}
{\displaystyle \lim_{n\rightarrow \infty}{\mathbb E}\left(W_n(t_n)\right)=0.}
\end{equation*}
\end{proof}

\section{Main Result}

\begin{result}
\label{eq:Main}
\begin{equation}
{\displaystyle \lim_{n\rightarrow \infty}r_n(M_{[0,\,1]})=1.}
\end{equation}
\end{result}
\begin{proof}
Appealing to Proposition~\ref{eq:Pro3} and using the inequality 
$W_n = W_n I(W_n > 0) \ge I(W_n > 0)\ge 0$, we obtain \eqref{eq:Step2} under Model~$M_{[0,\,1]}$, i.e.,
\begin{equation}
\label{eq:Step22}
\lim_{n \to \infty} \mathbb{P}\bigl(W_n(t_n) = 0\bigr) = 1.
\end{equation}

Combining Propositions~\ref{eq:Pro1} and \eqref{eq:Step22}, we obtain Result~\ref{eq:Main}.
\end{proof}

\section{Discussion}

The main result for the model $M_{[0,\,1]}$ also holds for the model $M_k$, and in particular Claim \ref{eq:cl1} with a precise rate can be obtained via a local limit theorem, as is classical in number theory \citep{AF1988} using the circle method. Our results also hold for general score distributions on $[0,1]$. Indeed, setting 
$D = \{x \in [0,1] : \mathbb{P}(X = x) > 0\}$, if $\mathbb{P}(X \in D) < 1$, 
it follows by straightforward considerations that the maximum is asymptotically unique.

\section*{Acknowledgements}
YM would like to thank Noga Alon for valuable discussions and suggestions, and Doron Zeilberger for the opportunity to present this work in his seminar and for his insightful questions.
We thank the referee for a careful reading of the manuscript and for helpful comments, questions, and suggestions that significantly improved the clarity and quality of the paper.
Research of GA was supported in part by BSF grant 2024051. 
Research of YM was supported in part by BSF grant 2020063.

\end{document}